\documentclass[numbers=enddot,12pt,final,onecolumn,notitlepage]{scrartcl}%
\usepackage[headsepline,footsepline,manualmark]{scrlayer-scrpage}
\usepackage[all,cmtip]{xy}
\usepackage{amsfonts}
\usepackage{amssymb}
\usepackage{framed}
\usepackage{amsmath}
\usepackage{comment}
\usepackage{color}
\usepackage{hyperref}
\usepackage[sc]{mathpazo}
\usepackage[T1]{fontenc}
\usepackage{amsthm}
\providecommand{\U}[1]{\protect\rule{.1in}{.1in}}
\theoremstyle{definition}
\newtheorem{theo}{Theorem}[section]
\newenvironment{theorem}[1][]
{\begin{theo}[#1]\begin{leftbar}}
{\end{leftbar}\end{theo}}
\newtheorem{lem}[theo]{Lemma}
\newenvironment{lemma}[1][]
{\begin{lem}[#1]\begin{leftbar}}
{\end{leftbar}\end{lem}}
\newtheorem{prop}[theo]{Proposition}

\newtheorem{defi}[theo]{Definition}

\newtheorem{remk}[theo]{Remark}

\newtheorem{coro}[theo]{Corollary}
\newenvironment{corollary}[1][]
{\begin{coro}[#1]\begin{leftbar}}
{\end{leftbar}\end{coro}}
\newtheorem{conv}[theo]{Convention}

\newtheorem{quest}[theo]{Question}
\newenvironment{question}[1][]
{\begin{quest}[#1]\begin{leftbar}}
{\end{leftbar}\end{quest}}
\newtheorem{exmp}[theo]{Example}

\newenvironment{statement}{\begin{quote}}{\end{quote}}
\newenvironment{verlong}{}{}
\newenvironment{vershort}{}{}

\excludecomment{verlong}
\includecomment{vershort}
\excludecomment{noncompile}

\let\sumnonlimits\sum
\let\prodnonlimits\prod
\renewcommand{\sum}{\sumnonlimits\limits}
\renewcommand{\prod}{\prodnonlimits\limits}
\ihead{A constructive proof of Orzech's theorem}
\ohead{20 April 2026}
\begin{document}
\section*{A constructive proof of Orzech's theorem}

\subsection*{\textit{Darij Grinberg}}

\subsubsection*{version 2.0, 16 April 2026}

\begin{statement}
{\small \textbf{Abstract.} Let $A$ be a commutative ring with unity, and $M$ a
finitely generated $A$-module. In 1971, Morris Orzech showed that any
surjective $A$-module homomorphism from a submodule of $M$ to $M$ must be an
isomorphism. We give a constructive proof of this fact using the
Cayley--Hamilton theorem. }
\end{statement}

The purpose of this note is to prove Morris Orzech's theorem on surjective
homomorphisms of modules \cite[Theorem 1]{orzech} within constructive
mathematics. Our main weapon will be the Cayley--Hamilton theorem.

{\small The LaTeX sourcecode of this note contains additional details of
proofs inside \textquotedblleft verlong\textquotedblright\ environments (i.
e., between \textquotedblleft\texttt{%
$\backslash$%
begin\{verlong\}}\textquotedblright\ and \textquotedblleft\texttt{%
$\backslash$%
end\{verlong\}}\textquotedblright). I doubt they are of any use.}

\begin{center}
***
\end{center}

Let us begin by stating the theorem:

\begin{theorem}
\label{thm.orzech}Let $A$ be a commutative ring with unity. Let $M$ be a
finitely generated $A$-module. Let $N$ be an $A$-submodule of $M$, and let
$f:N\rightarrow M$ be a surjective $A$-module homomorphism. Then, $f$ is an
$A$-module isomorphism.
\end{theorem}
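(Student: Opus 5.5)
The plan is to show that $\ker f = 0$. Surjectivity is given, so this suffices, and the argument will be a single Cayley--Hamilton computation with no case distinctions or choice.

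\textbf{Step 1: encode $f$ by a matrix.} Choose generators $m_1, \ldots, m_n$ of $M$ and let $u : A^n \to M$ be the surjection sending $e_i \mapsto m_i$. Since $f$ is surjective, pick $n_i \in N$ with $f(n_i) = m_i$. Since $n_i \in M$, write $n_i = \sum_j q_{ji} m_j$ and let $Q = (q_{ji}) \in A^{n \times n}$, so that $u(Q e_i) = n_i$. By linearity, for every $v \in A^n$ we get
\[
u(Qv) \in N \qquad \text{and} \qquad f(u(Qv)) = u(v).
\]
The first holds because $u(Qv)$ is an $A$-combination of the $n_i$; the second because $f(n_i)=m_i=u(e_i)$. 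This is the key intertwining identity: on the image of $u \circ Q$, the map $f$ acts like a left inverse of $Q$.

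\textbf{Step 2: build a chain above a kernel element.} Let $x \in N$ with $f(x) = 0$, and write $x = u(c)$ for some $c \in A^n$, using surjectivity of $u$. Set $z_j = u(Q^j c)$ for $j \geq 0$, so $z_0 = x \in N$. For $j \geq 1$ the identity from Step 1 gives $z_j \in N$ and $f(z_j) = z_{j-1}$, while $f(z_0) = 0$. Hence all $z_j$ lie in $N$, and applying $f$ repeatedly to any $A$-linear combination of the $z_j$ stays inside $N$, because each $f(z_j)$ is either another $z_{j-1}$ or $0$. Concretely, $f^k(z_j) = z_{j-k}$ if $k \leq j$ and $f^k(z_j) = 0$ if $k > j$.

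\textbf{Step 3: apply Cayley--Hamilton.} Let $\chi(t) = t^n + a_1 t^{n-1} + \cdots + a_n$ be the characteristic polynomial of $Q$. By Cayley--Hamilton, $\chi(Q) = 0$, so applying $u$ to $\chi(Q)c = 0$ yields
\[
z_n + a_1 z_{n-1} + \cdots + a_n z_0 = 0 .
\]
Now apply $f^n$, which is legitimate by Step 2. We get $f^n(z_n) = z_0 = x$, and $f^n(z_j) = 0$ for every $j < n$. Hence $x = 0$, so $f$ is injective and therefore an isomorphism.

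The main subtlety, rather than an obstacle, is that $f$ is only defined on $N$, not on all of $M$. One must therefore check that every iterate of $f$ is applied only to elements of $N$; the bookkeeping in Step 2 handles exactly this. Everything is explicit (finite generators, matrix, characteristic polynomial), so the proof is constructive.
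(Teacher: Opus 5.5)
Your proof is correct. Every element you apply $f$ to lies in $N$: the $z_j$ with $j\ge 1$ by construction, $z_0=x$ by hypothesis, and their span is $f$-stable. So applying $f^n$ to $u(\chi(Q)c)=0$ legitimately yields both $0$ and $x$.

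The engine is the same as the paper's: Cayley--Hamilton for a matrix encoding lifts through $f$, then peeling off the coefficients of the characteristic polynomial one at a time. The organization differs, though. The paper proceeds in three stages:
\begin{enumerate}
\item It proves an abstract lemma on $A^n$ (Lemma \ref{lem.orzech-lem}: if $g^{-1}(V)\subseteq V$ then $g(V)\subseteq V$). Its induction over the truncated polynomials $\sum_{k=0}^{n-r}c_{r+k}X^k$, evaluated at the matrix of $g$, plays the same role as your repeated application of $f$.
\item It treats the case where $N$ is finitely generated, presenting $N$ by a surjection $p:A^n\to N$ so that $\ker(f\circ p)$ makes sense.
\item It reduces the general case to this one: for each kernel element $v$, it replaces $N$ by the finitely generated submodule spanned by $v$ and chosen preimages of the generators of $M$.
\end{enumerate}
You present $M$ instead, so $f\circ u$ is not defined on all of $A^n$. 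You get around this by applying $f$ only along the backward orbit $z_j=u(Q^jc)$. This removes both the auxiliary lemma and the reduction step, giving a shorter one-pass argument. What the paper's route buys in return is Lemma \ref{lem.orzech-lem} as a standalone statement about endomorphisms of $A^n$, which has some independent interest.
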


Morris Orzech discovered this fact \cite[Theorem 1]{orzech} in 1971. It
generalizes the following result of Vasconcelos:

\begin{corollary}
\label{cor.orzech}Let $A$ be a commutative ring with unity. Let $M$ be a
finitely generated $A$-module. Let $f:M\rightarrow M$ be a surjective
$A$-module endomorphism of $M$. Then, $f$ is an $A$-module isomorphism.
\end{corollary}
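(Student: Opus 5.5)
The statement to prove is Corollary \ref{cor.orzech}. It follows at once from Theorem \ref{thm.orzech} by taking $N=M$: then $f:N\rightarrow M$ is a surjective homomorphism from a submodule of $M$ to $M$, so the theorem says $f$ is an isomorphism. Since the theorem has not yet been proved at this point, I would also give a self-contained constructive argument based on the Cayley--Hamilton theorem (the determinant trick). This version does not rely on the theorem.

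\textbf{Step 1: make $M$ a module over a polynomial ring.} Regard $M$ as an $A[t]$-module by letting $t$ act as $f$. Let $m_1,\ldots,m_n$ generate $M$ over $A$. Let $I=tA[t]$ be the ideal generated by $t$. Surjectivity of $f$ means exactly that $M=IM$.

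\textbf{Step 2: write the generators in terms of their images.} For each $i$, choose $a_{ij}\in A$ such that
\[
m_i=\sum_{j} a_{ij} f(m_j)=\sum_{j} t\,a_{ij}m_j .
\]
Put $P=(t\,a_{ij})_{i,j}$, a matrix with entries in $I$. Then $(\mathrm{Id}-P)$ annihilates the column vector $(m_1,\ldots,m_n)^T$. Multiplying on the left by the adjugate of $\mathrm{Id}-P$ shows that $d=\det(\mathrm{Id}-P)$ kills every $m_i$, and hence kills $M$. Expanding the determinant gives $d=1+tq(t)$ for some polynomial $q\in A[t]$. So $(1+f\,q(f))(m)=0$ for all $m\in M$.

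\textbf{Step 3: conclude injectivity.} Suppose $f(m)=0$. The relation from Step 2 gives $m=-q(f)(f(m))=0$. So $f$ is injective, and together with surjectivity it is an isomorphism. Its inverse is the polynomial map $-q(f)$, which is visibly $A$-linear.

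\textbf{Main obstacle.} The only delicate point is the adjugate identity $\operatorname{adj}(B)\,B=\det(B)\,\mathrm{Id}$ over the commutative ring $A[t]$, which is Cayley--Hamilton in its determinant-trick form. Its proof is purely algebraic, so the argument stays constructive: every choice of the coefficients $a_{ij}$ is a finite existence statement coming directly from surjectivity, and no choice principle is needed.
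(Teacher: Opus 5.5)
Your proposal is correct. Its first sentence is exactly the paper's proof: the paper proves Theorem~\ref{thm.orzech} first and then obtains Corollary~\ref{cor.orzech} by setting $N=M$, so that reduction alone suffices.

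Your self-contained argument takes a genuinely different route, the classical determinant trick (Vasconcelos's proof). You make $M$ an $A[t]$-module with $t$ acting as $f$, apply the adjugate identity to $I_n-P$ over $A[t]$, and get $1+f\,q(f)=0$ on $M$. This buys brevity and an explicit inverse $-q(f)$ that is a polynomial in $f$. The cost is generality: letting $t$ act as $f$ requires iterating $f$, which is impossible when $f$ is only defined on a proper submodule $N$ of $M$. This is why, as the paper remarks, such proofs do not extend to Theorem~\ref{thm.orzech}.

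The paper avoids this by passing to the free module $A^n$. There it builds $g:A^n\to A^n$ with $f\circ p\circ g=p$, so the iteration needed is of $g$ rather than of $f$. It then applies Cayley--Hamilton to the matrix of $g$ over $A$ to prove the invariance Lemma~\ref{lem.orzech-lem} for $V=\operatorname*{Ker}(f\circ p)$, and finally reduces an arbitrary $N$ to a finitely generated $N'$.

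A minor terminological point: the identity $\operatorname{adj}(B)\,B=\det(B)\,I_n$ follows from Laplace expansion and is more elementary than Cayley--Hamilton (it is the usual input to proving it). So your route does not actually need Cayley--Hamilton at all.
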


Corollary \ref{cor.orzech} is well-known (e.g., it appears in \cite[Lemma
A.3]{dg-pidisolift} and in \cite{mseold}), but most of its proofs in
literature do not generalize to Theorem \ref{thm.orzech}.

Orzech's original proof of Theorem \ref{thm.orzech} (with the corrections
provided in \cite{mse}, as the original version was shaky) proceeds by
reducing the theorem to the case when $A$ is Noetherian, and then using this
Noetherianness in an elegant and yet mysterious way. The proof is not
constructive and (to my knowledge) cannot easily be made constructive. In this
note, I will present a constructive way to prove Theorem \ref{thm.orzech}.

Let us first make some preparations. We let $\mathbb{N}=\left\{
0,1,2,\ldots\right\}  $. We fix a commutative ring $A$ with unity. For every
$n\in\mathbb{N}$, let $I_{n}$ denote the identity $n\times n$-matrix in
$A^{n\times n}$. We reserve a fresh symbol $X$ as an indeterminate for
polynomials. We embed $A$ into the polynomial ring $A\left[  X\right]  $
canonically, and we use this to embed the matrix ring $A^{n\times n}$ into
$\left(  A\left[  X\right]  \right)  ^{n\times n}$ canonically for every
$n\in\mathbb{N}$. For every $n\in\mathbb{N}$ and any square matrix $M\in
A^{n\times n}$, we define the \textit{characteristic polynomial} $\chi_{M}$ of
$M$ as the polynomial $\det\left(  X\cdot I_{n}-M\right)  $. (This is one of
the two common ways to define a characteristic polynomial of a matrix $M$. The
other way is to define it as $\det\left(  M-X\cdot I_{n}\right)  $. These two
definitions result in two polynomials which differ only by multiplication by
$\left(  -1\right)  ^{n}$.) The famous \textit{Cayley--Hamilton theorem}
states the following:

\begin{theorem}
\label{thm.cayley-hamilton}Let $n\in\mathbb{N}$. Let $A$ be a commutative ring
with unity. Let $M\in A^{n\times n}$. Then, $\chi_{M}\left(  M\right)  =0$.
(In words: Substituting the matrix $M$ for $X$ in the characteristic
polynomial $\chi_{M}$ of $M$ yields the zero matrix.)
\end{theorem}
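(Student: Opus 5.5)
The approach is the standard adjugate argument, carried out inside the ring $A\left[  X\right]  $ and then specialized. For a square matrix $B\in\left(  A\left[  X\right]  \right)  ^{n\times n}$, let $\operatorname{adj}B$ denote its adjugate (the transpose of the cofactor matrix). The identity
\[
B\cdot\operatorname{adj}B=\operatorname{adj}B\cdot B=\det B\cdot I_{n}
\]
holds over every commutative ring, and in particular over $A\left[  X\right]  $; it follows from Laplace expansion along rows and columns. We apply it to $B=X\cdot I_{n}-M$, which gives
\[
\left(  X\cdot I_{n}-M\right)  \cdot\operatorname{adj}\left(  X\cdot I_{n}-M\right)  =\chi_{M}\cdot I_{n}.
\]

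Next we pass from matrices with polynomial entries to polynomials with matrix coefficients. Every entry of $\operatorname{adj}\left(  X\cdot I_{n}-M\right)  $ is a minor of size $n-1$, hence a polynomial of degree at most $n-1$. We may therefore write
\[
\operatorname{adj}\left(  X\cdot I_{n}-M\right)  =\sum_{k=0}^{n-1}X^{k}B_{k}\qquad\text{with }B_{k}\in A^{n\times n}.
\]
Write also $\chi_{M}=\sum_{k=0}^{n}c_{k}X^{k}$ with $c_{k}\in A$. Expand the left-hand side as $\sum_{k}X^{k+1}B_{k}-\sum_{k}X^{k}MB_{k}$ and compare coefficients of $X^{k}$ entrywise. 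This yields the system
\[
B_{k-1}-MB_{k}=c_{k}I_{n}\qquad\text{for }0\leq k\leq n,
\]
with the convention $B_{-1}=B_{n}=0$.

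Finally, multiply the $k$-th equation on the left by $M^{k}$ and sum over $k$. The left-hand sides telescope:
\[
\sum_{k=0}^{n}\left(  M^{k}B_{k-1}-M^{k+1}B_{k}\right)  =0,
\]
since $M^{k}B_{k-1}$ cancels with the term $-M^{k}B_{k-1}$ coming from index $k-1$, and the boundary terms vanish because $B_{-1}=B_{n}=0$. The right-hand sides sum to $\sum_{k}c_{k}M^{k}=\chi_{M}\left(  M\right)  $, so $\chi_{M}\left(  M\right)  =0$.

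The main obstacle is the step where one is tempted to "substitute $X=M$" directly into the identity. This is illegitimate, because evaluation at a matrix is not a ring homomorphism from $\left(  A\left[  X\right]  \right)  ^{n\times n}$: $M$ does not commute with the $B_{k}$. The coefficient comparison avoids the problem. It only ever multiplies by powers of $M$ on the left, and it uses only that $M$ commutes with itself and with the scalar matrices $c_{k}I_{n}$. Everything is finite and equational, so the argument is constructive.
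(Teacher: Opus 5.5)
Your proof is correct. The adjugate identity over $A\left[X\right]$, the expansion $\operatorname{adj}\left(X\cdot I_{n}-M\right)=\sum_{k=0}^{n-1}X^{k}B_{k}$, the coefficient equations $B_{k-1}-MB_{k}=c_{k}I_{n}$ with $B_{-1}=B_{n}=0$, and the telescoping after left multiplication by $M^{k}$ all check out. You also correctly identify and avoid the illegitimate ``substitute $X=M$'' shortcut.

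The paper gives no proof of this theorem itself. It cites \cite{bernhardt}, \cite{conrad1}, \cite{grinberg-trach} and textbook field-case proofs, and notes two options: use a proof that generalizes verbatim to arbitrary commutative rings, or reduce to the field case as Conrad does via universal identities. Your argument is of the first kind, and it fits this note well. It is a finite equational computation valid uniformly over every commutative ring, so it keeps the whole proof of Orzech's theorem constructive. The reduction route instead needs a separate proof over fields plus a transfer step: the identity is established for the generic matrix over $\mathbb{Z}\left[x_{ij}\right]$ and then specialized. The direct adjugate proof does not need that extra layer.
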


In this exact form, Theorem \ref{thm.cayley-hamilton} is proven in
\cite{bernhardt}, in \cite[Theorem 3.4]{conrad1} and in \cite[Theorem
2.5]{grinberg-trach}.\footnote{Of course, the notations in these sources don't
exactly match the notations we are using here. For example, the $A$, the $X$
and the $M$ in our Theorem \ref{thm.cayley-hamilton} correspond to the
$\mathbb{K}$, the $t$ and the $A$ in \cite[Theorem 2.5]{grinberg-trach}.} Many
more places contain almost complete proofs of Theorem
\ref{thm.cayley-hamilton}: For example, Theorem \ref{thm.cayley-hamilton} is
proven in most standard texts on linear algebra in the case when $A$ is a
field. Some of these proofs (e.g., the proof given in \cite[Theorem
7.10]{gill}, or the proof given in \cite[Theorem 5.9]{knapp2016}, or the
proofs given in \cite{jerry332}, or Straubing's combinatorial proof given in
\cite{straubing}\footnote{We notice that the two displayed equations right
before the Lemma in \cite[p. 275]{straubing} should be corrected to%
\[
p_{A}^{+}\left(  A\right)  _{ij}=\sum_{\left(  \sigma,\pi\right)  \in
T_{ij}^{+}}\mu\left(  \sigma\right)  \mu\left(  \pi\right)
,\ \ \ \ \ \ \ \ \ \ p_{A}^{-}\left(  A\right)  _{ij}=\sum_{\left(  \sigma
,\pi\right)  \in T_{ij}^{-}}\mu\left(  \sigma\right)  \mu\left(  \pi\right)
.
\]
(To be fair, I do not know if they are wrong in the original printed version
of \cite{straubing} or only in Elsevier's dismal scan of the paper.)} and in
\cite[\S 3]{zeilberger}) can be straightforwardly generalized to the general
case. Even if your favorite proof of Theorem \ref{thm.cayley-hamilton} in the
case when $A$ is a field does not generalize to the general case, it is still
easy to derive the general case from the case of $A$ being a field (this is
what Conrad does in \cite[Theorem 3.4]{conrad1}).

Theorem \ref{thm.cayley-hamilton} has the following direct consequence:

\begin{corollary}
\label{cor.cayley-hamilton}Let $n\in\mathbb{N}$. Let $A$ be a commutative ring
with unity. Let $M\in A^{n\times n}$. Then, there exists an $\left(
n+1\right)  $-tuple $\left(  c_{0},c_{1},\ldots,c_{n}\right)  \in A^{n+1}$
such that $c_{0}M^{0}+c_{1}M^{1}+\cdots+c_{n}M^{n}=0$ and $c_{n}=1$.
\end{corollary}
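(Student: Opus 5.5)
The plan is to read off the tuple $\left(c_{0},c_{1},\ldots,c_{n}\right)$ directly from the coefficients of the characteristic polynomial $\chi_{M}$ and then invoke Theorem \ref{thm.cayley-hamilton}. The one fact needed beyond Cayley--Hamilton is that $\chi_{M}=\det\left(X\cdot I_{n}-M\right)$ is a monic polynomial of degree $n$ in $A\left[X\right]$. I would write it as $\chi_{M}=\sum_{k=0}^{n}c_{k}X^{k}$ with $c_{k}\in A$ and $c_{n}=1$.

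To establish monicity, I would expand the determinant via the Leibniz formula:
\[
\chi_{M}=\sum_{\sigma\in S_{n}}\operatorname{sign}\sigma\prod_{i=1}^{n}\left(X\delta_{i,\sigma\left(i\right)}-m_{i,\sigma\left(i\right)}\right),
\]
where $M=\left(m_{i,j}\right)$. Each factor is a polynomial of degree at most $1$, and it has degree exactly $1$ (with leading coefficient $1$) only when $\sigma\left(i\right)=i$. Hence for $\sigma\neq\operatorname{id}$ the product has at least two constant factors and so has degree at most $n-2$. For $\sigma=\operatorname{id}$ the product is $\prod_{i=1}^{n}\left(X-m_{i,i}\right)$, which is monic of degree $n$. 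Therefore $\chi_{M}$ has degree at most $n$, and its $X^{n}$-coefficient is $1$. This argument is valid over any commutative ring and requires no division, so it is constructive. The case $n=0$ is consistent: there $\chi_{M}=1$, and indeed $c_{0}=1$.

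Next, I would substitute $M$ into $\chi_{M}$. Since substituting $M$ for $X$ is evaluation of a polynomial with scalar coefficients at $M$, we get $\chi_{M}\left(M\right)=\sum_{k=0}^{n}c_{k}M^{k}$. By Theorem \ref{thm.cayley-hamilton} this is $0$, which gives the required tuple. I do not expect any real obstacle. The only step needing care is the degree and leading-coefficient bookkeeping in the Leibniz expansion, and that bookkeeping is routine.
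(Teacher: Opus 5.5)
Your proposal is correct and follows the paper's proof: write $\chi_{M}$ as a monic polynomial of degree $n$, evaluate it at $M$, and invoke Theorem \ref{thm.cayley-hamilton}. The only difference is that you prove the monicity of $\chi_{M}$ via the Leibniz expansion, while the paper simply cites this fact as well-known.
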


\begin{verlong}
\begin{proof}
[Proof of Corollary \ref{cor.cayley-hamilton}.]It is well-known that the
characteristic polynomial $\chi_{M}$ of $M$ is a monic polynomial of degree
$n$ over $A$. In other words, there exists an $\left(  n+1\right)  $-tuple
$\left(  d_{0},d_{1},\ldots,d_{n}\right)  \in A^{n+1}$ such that $\chi
_{M}=d_{0}X^{0}+d_{1}X^{1}+\cdots+d_{n}X^{n}$ and $d_{n}=1$. Consider this
$\left(  d_{0},d_{1},\ldots,d_{n}\right)  $. Evaluating both sides of the
equality $\chi_{M}=d_{0}X^{0}+d_{1}X^{1}+\cdots+d_{n}X^{n}$ at $X=M$, we
obtain $\chi_{M}\left(  M\right)  =d_{0}M^{0}+d_{1}M^{1}+\cdots+d_{n}M^{n}$.
Thus, $d_{0}M^{0}+d_{1}M^{1}+\cdots+d_{n}M^{n}=\chi_{M}\left(  M\right)  =0$
(by Theorem \ref{thm.cayley-hamilton}). Hence, there exists an $\left(
n+1\right)  $-tuple $\left(  c_{0},c_{1},\ldots,c_{n}\right)  \in A^{n+1}$
such that $c_{0}M^{0}+c_{1}M^{1}+\cdots+c_{n}M^{n}=0$ and $c_{n}=1$ (namely,
the $\left(  n+1\right)  $-tuple $\left(  d_{0},d_{1},\ldots,d_{n}\right)  $).
This proves Corollary \ref{cor.cayley-hamilton}.
\end{proof}
\end{verlong}

\begin{vershort}
\begin{proof}
[Proof of Corollary \ref{cor.cayley-hamilton}.]It is well-known that the
characteristic polynomial $\chi_{M}$ of $M$ is a monic polynomial of degree
$n$ over $A$. In other words, there exists an $\left(  n+1\right)  $-tuple
$\left(  c_{0},c_{1},\ldots,c_{n}\right)  \in A^{n+1}$ such that $\chi
_{M}=c_{0}X^{0}+c_{1}X^{1}+\cdots+c_{n}X^{n}$ and $c_{n}=1$. Consider this
$\left(  c_{0},c_{1},\ldots,c_{n}\right)  $. Evaluating both sides of the
equality $\chi_{M}=c_{0}X^{0}+c_{1}X^{1}+\cdots+c_{n}X^{n}$ at $X=M$, we
obtain $\chi_{M}\left(  M\right)  =c_{0}M^{0}+c_{1}M^{1}+\cdots+c_{n}M^{n}$.
Thus, $c_{0}M^{0}+c_{1}M^{1}+\cdots+c_{n}M^{n}=\chi_{M}\left(  M\right)  =0$
(by Theorem \ref{thm.cayley-hamilton}). This proves Corollary
\ref{cor.cayley-hamilton}.
\end{proof}
\end{vershort}

We can now use Corollary \ref{cor.cayley-hamilton} to prove the following lemma:

\begin{lemma}
\label{lem.orzech-lem}Let $n\in\mathbb{N}$. Let $g:A^{n}\rightarrow A^{n}$ be
an $A$-linear map. Let $V$ be an $A$-submodule of $A^{n}$ such that
$g^{-1}\left(  V\right)  \subseteq V$. Then, $g\left(  V\right)  \subseteq V$.
\end{lemma}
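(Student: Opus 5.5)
Represent $g$ by a matrix $G\in A^{n\times n}$. By Corollary \ref{cor.cayley-hamilton}, there is an $(n+1)$-tuple $(c_0,c_1,\ldots,c_n)\in A^{n+1}$ with $c_n=1$ and $c_0 g^0+c_1 g^1+\cdots+c_n g^n=0$ as maps $A^n\to A^n$. The plan is to use this relation to write $g(v)$, for $v\in V$, in a form that forces it into $V$ via the hypothesis $g^{-1}(V)\subseteq V$.

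First, I iterate the hypothesis. If $w\in A^n$ satisfies $g^k(w)\in V$ for some $k\geq 1$, then $g^{k-1}(w)\in g^{-1}(V)\subseteq V$. By induction on $k$, $g^k(w)\in V$ implies $w\in V$, for every $k\in\mathbb{N}$. In other words, $(g^k)^{-1}(V)\subseteq V$.

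Now fix $v\in V$ and set $w=g(v)$. I must show $w\in V$. From the Cayley--Hamilton relation applied to $v$,
\[
g^n(v)=-\left(c_0 v+c_1 g(v)+\cdots+c_{n-1}g^{n-1}(v)\right).
\]
This does not immediately show that $g(v)\in V$. Instead I use a downward induction on the relation $g^k(w)\in V$. We know $g^{-1}(w)\ni v$, but that does not help directly, so I consider the element
\[
u=c_1 v+c_2 g(v)+\cdots+c_n g^{n-1}(v).
\]
Then $g(u)=c_1 g(v)+\cdots+c_n g^n(v)=-c_0 v\in V$, so $u\in V$.

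That alone is still insufficient. The systematic approach is to show by induction on $j=n,n-1,\ldots,1$ that
\[
u_j:=c_j v+c_{j+1}g(v)+\cdots+c_n g^{n-j}(v)\in V .
\]
The base case is $u_n=c_n v=v\in V$. The recursion $g(u_j)=u_{j-1}-c_{j-1}v$ goes the wrong direction, so I reorganize with the "tails"
\[
t_j:=c_0 v+c_1 g(v)+\cdots+c_{j-1}g^{j-1}(v) .
\]
Then $t_n=-g^n(v)$, and the tails satisfy $t_{j+1}=t_j+c_j g^j(v)$.

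I do not think these tails yield a clean induction, so the better idea is to apply Cayley--Hamilton to $w$ itself and exploit the map $g$ together with the preimage condition $(g^k)^{-1}(V)\subseteq V$. The claim to establish is $g^k(w)\in V$ for some $k$. Since $g^{k}(w)=g^{k+1}(v)$, it suffices to find $k$ with $g^{k+1}(v)\in V$. So I would show by induction that $g^m(v)\in V$ for all $m$ large, which is circular unless combined with the monic relation.

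The main obstacle is therefore to combine the monic relation with the preimage condition. What I would try first is the module $W=V+g(V)+\cdots+g^{n-1}(V)$. By Cayley--Hamilton, $W$ is $g$-stable, and one wants $W\subseteq V$. For $x\in W$, some power $g^{n-1}$ should map $x$ into $V$: write $x=\sum_i g^i(v_i)$ and peel off terms using monicity. Then $(g^{n-1})^{-1}(V)\subseteq V$ gives $x\in V$. Verifying that this peeling works is the crux.
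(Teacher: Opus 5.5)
Your proposal has a genuine gap: it stops exactly where an argument is needed. The final plan is to show $g^{n-1}(W)\subseteq V$ for $W=V+g(V)+\cdots+g^{n-1}(V)$ by ``peeling off terms using monicity.'' For $n\geq 2$ this is circular.

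Already for $x=v\in V$ it requires $g^{n-1}(v)\in V$, a forward-image statement of the very kind you are trying to prove. For $x=g(v)$, the monic relation only gives $g^{n-1}(x)=g^{n}(v)=-\left(c_0v+c_1g(v)+\cdots+c_{n-1}g^{n-1}(v)\right)$. You can place this in $V$ only if you already know $g^{k}(v)\in V$ for $1\leq k\leq n-1$.

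In fact $g^{n-1}(W)\subseteq V$ is equivalent to the lemma. Via $(g^{n-1})^{-1}(V)\subseteq V$ it yields $W\subseteq V$, hence $g(V)\subseteq V$; conversely, the lemma gives $W=V$. So this reduction gains nothing. The detour via $g^{m}(v)\in V$ for large $m$ has the same defect, as you noted yourself.

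The missing idea is one you had already written down and then discarded. The recursion $g(u_j)=u_{j-1}-c_{j-1}v$ does not go the wrong direction. It is exactly suited to the hypothesis $g^{-1}(V)\subseteq V$, which pulls membership in $V$ back along $g$, provided you induct \emph{upward} in $j$ instead of downward from $u_n=v$.

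Start from $u_0=c_0v+c_1g(v)+\cdots+c_ng^{n}(v)=0\in V$ by Cayley--Hamilton. If $u_{j-1}\in V$, then $g(u_j)=u_{j-1}-c_{j-1}v\in V$ since $v\in V$, so $u_j\in g^{-1}(V)\subseteq V$. Your computation showing $u\in V$ is precisely this step for $j=1$ (your $u$ is $u_1$). Continuing to $j=n-1$ yields $u_{n-1}=c_{n-1}v+g(v)\in V$, hence $g(v)\in V$. Here assume $n\geq 1$; for $n=0$ we have $V=0$.

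This is exactly the paper's proof. With $M$ the matrix of $g$, it shows $\left(\sum_{k=0}^{n-u}c_{u+k}M^{k}\right)(V)\subseteq V$ by induction on $u$, starting at $u=0$ where this operator is $0$, and reads off the case $u=n-1$. Neither the iterated hypothesis $(g^k)^{-1}(V)\subseteq V$ nor the tails $t_j$ are needed.
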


\begin{proof}
[Proof of Lemma \ref{lem.orzech-lem}.]

\begin{verlong}
If $n=0$, then Lemma \ref{lem.orzech-lem} is obviously
true.\footnote{\textit{Proof.} Assume that $n=0$. Then, $A^{n}=A^{0}$ and thus
$V\subseteq A^{n}=A^{0}=0$, so that $V=0$, and thus $g\left(  \underbrace{V}%
_{=0}\right)  =g\left(  0\right)  =0\subseteq V$. Thus, Lemma
\ref{lem.orzech-lem} is true, qed.} Hence, for the rest of this proof, we can
WLOG assume that we don't have $n=0$. Assume this. We have $n\geq1$ (since we
don't have $n=0$), thus $n-1\in\left\{  0,1,\ldots,n\right\}  $.
\end{verlong}

\begin{vershort}
If $n=0$, then Lemma \ref{lem.orzech-lem} is obviously true (because in this
case, $V\subseteq A^{n}=A^{0}=0$ and thus $V=0$). Hence, for the rest of this
proof, we can WLOG assume that $n\geq1$. Assume this, and notice that this
yields $n-1\in\left\{  0,1,\ldots,n\right\}  $.
\end{vershort}

Let $\left(  e_{1},e_{2},\ldots,e_{n}\right)  $ be the standard basis of the
$A$-module $A^{n}$. (Thus, for every $i\in\left\{  1,2,\ldots,n\right\}  $,
the vector $e_{i}$ is the vector in $A^{n}$ whose $i$-th coordinate is $1$ and
whose other coordinates are all $0$.) Let $M\in A^{n\times n}$ be the $n\times
n$-matrix which represents the $A$-linear map $g:A^{n}\rightarrow A^{n}$ with
respect to this basis $\left(  e_{1},e_{2},\ldots,e_{n}\right)  $ of $A^{n}$.
Then,%
\begin{equation}
Mw=g\left(  w\right)  \ \ \ \ \ \ \ \ \ \ \text{for every }w\in A^{n}.
\label{pf.lem.orzech-lem.Mw}%
\end{equation}

Corollary \ref{cor.cayley-hamilton} shows that there exists an $\left(
n+1\right)  $-tuple $\left(  c_{0},c_{1},\ldots,c_{n}\right)  \in A^{n+1}$
such that $c_{0}M^{0}+c_{1}M^{1}+\cdots+c_{n}M^{n}=0$ and $c_{n}=1$. Consider
this $\left(  c_{0},c_{1},\ldots,c_{n}\right)  $. We have $\sum_{k=0}^{n}%
c_{k}M^{k}=c_{0}M^{0}+c_{1}M^{1}+\cdots+c_{n}M^{n}=0$.

We shall now show that every $u\in\left\{  0,1,\ldots,n\right\}  $ satisfies%
\begin{equation}
\left(  \sum_{k=0}^{n-u}c_{u+k}M^{k}\right)  \left(  V\right)  \subseteq V.
\label{pf.lem.orzech-lem.ind}%
\end{equation}

\textit{Proof of (\ref{pf.lem.orzech-lem.ind}):} We will prove
(\ref{pf.lem.orzech-lem.ind}) by induction over $u$:

\textit{Induction base:} From $n-0=n$, we obtain
\[
\left(  \sum_{k=0}^{n-0} c_{0+k}M^{k}\right)  \left(  V\right)  = \left(
\sum_{k=0}^{n}\underbrace{c_{0+k}}_{=c_{k}}M^{k}\right)  \left(  V\right)
=\underbrace{\left(  \sum_{k=0}^{n}c_{k}M^{k}\right)  }_{=0}\left(  V\right)
=0\left(  V\right)  =0\subseteq V.
\]
In other words, (\ref{pf.lem.orzech-lem.ind}) holds for $u=0$. This completes
the induction base.

\textit{Induction step:} Let $p\in\left\{  0,1,\ldots,n\right\}  $ be such
that $p>0$. Assume that (\ref{pf.lem.orzech-lem.ind}) holds for $u=p-1$. We
now must show that (\ref{pf.lem.orzech-lem.ind}) holds for $u=p$.

We have assumed that (\ref{pf.lem.orzech-lem.ind}) holds for $u=p-1$. In other
words,%
\begin{equation}
\left(  \sum_{k=0}^{n-\left(  p-1\right)  }c_{\left(  p-1\right)  +k}%
M^{k}\right)  \left(  V\right)  \subseteq V.
\label{pf.lem.orzech-lem.ind.pf.IH}%
\end{equation}
Now, $n-\left( p-1\right)  = n-p+1$; therefore,
\begin{align}
\sum_{k=0}^{n-\left(  p-1\right)  } c_{\left(  p-1\right)  +k}M^{k}  &
=\sum_{k=0}^{n-p+1}c_{\left(  p-1\right)  +k}M^{k}=c_{\left(  p-1\right)
+0}M^{0}+\sum_{k=1}^{n-p+1}c_{\left(  p-1\right)  +k}M^{k}\nonumber\\
&  \ \ \ \ \ \ \ \ \ \ \left(  \text{here, we have split off the addend for
}k=0\text{ from the sum}\right) \nonumber\\
&  =\underbrace{c_{\left(  p-1\right)  +0}}_{=c_{p-1}}\underbrace{M^{0}%
}_{=I_{n}}+\sum_{k=0}^{n-p}\underbrace{c_{\left(  p-1\right)  +\left(
k+1\right)  }}_{=c_{p+k}}\underbrace{M^{k+1}}_{=MM^{k}}\nonumber\\
&  \ \ \ \ \ \ \ \ \ \ \left(  \text{here, we have substituted }k+1\text{ for
}k\text{ in the sum}\right) \nonumber\\
&  =c_{p-1}I_{n}+\underbrace{\sum_{k=0}^{n-p}c_{p+k}MM^{k}}_{=M\left(
\sum_{k=0}^{n-p}c_{p+k}M^{k}\right)  }\nonumber\\
& =c_{p-1}I_{n}+M\left(  \sum_{k=0}^{n-p}c_{p+k}M^{k}\right)  .
\label{pf.lem.orzech-lem.ind.pf.1}%
\end{align}
Now, let $v\in V$. Then, applying both sides of the equality
(\ref{pf.lem.orzech-lem.ind.pf.1}) to $v$, we obtain%
\begin{align*}
\left(  \sum_{k=0}^{n-\left(  p-1\right)  }c_{\left(  p-1\right)  +k}%
M^{k}\right)  \left(  v\right)   &  =\left(  c_{p-1}I_{n}+M\left(  \sum
_{k=0}^{n-p}c_{p+k}M^{k}\right)  \right)  v\\
&  =c_{p-1}\underbrace{I_{n}v}_{=v}+\, M\left(  \sum_{k=0}^{n-p}c_{p+k}%
M^{k}\right)  v\\
&  =c_{p-1}v+M\left(  \sum_{k=0}^{n-p}c_{p+k}M^{k}\right)  v.
\end{align*}
Subtracting $c_{p-1}v$ from this equality, we obtain
\[
\left(  \sum_{k=0}^{n-\left(  p-1\right)  }c_{\left(  p-1\right)  +k}%
M^{k}\right)  \left(  v\right)  -c_{p-1}v=M\left(  \sum_{k=0}^{n-p}%
c_{p+k}M^{k}\right)  v=g\left(  \left(  \sum_{k=0}^{n-p}c_{p+k}M^{k}\right)
v\right)
\]
(by (\ref{pf.lem.orzech-lem.Mw}), applied to $w=\left(  \sum_{k=0}%
^{n-p}c_{p+k}M^{k}\right)  v$). Hence,%
\begin{align*}
g\left(  \left(  \sum_{k=0}^{n-p}c_{p+k}M^{k}\right)  v\right)   &  =\left(
\sum_{k=0}^{n-\left(  p-1\right)  }c_{\left(  p-1\right)  +k}M^{k}\right)
\left(  \underbrace{v}_{\in V}\right)  -c_{p-1}\underbrace{v}_{\in V}\\
&  \in\underbrace{\left(  \sum_{k=0}^{n-\left(  p-1\right)  }c_{\left(
p-1\right)  +k}M^{k}\right)  \left(  V\right)  }_{\substack{\subseteq
V\\\text{(by (\ref{pf.lem.orzech-lem.ind.pf.IH}))}}}-\, c_{p-1}V\subseteq
V-c_{p-1}V\subseteq V
\end{align*}
(since $V$ is an $A$-module). Hence, $\left(  \sum_{k=0}^{n-p}c_{p+k}%
M^{k}\right)  v\in g^{-1}\left(  V\right)  \subseteq V$.

Now, let us forget that we fixed $v$. We thus have shown that $\left(
\sum_{k=0}^{n-p}c_{p+k}M^{k}\right)  v\in V$ for every $v\in V$. In other
words, $\left(  \sum_{k=0}^{n-p}c_{p+k}M^{k}\right)  \left(  V\right)
\subseteq V$. In other words, (\ref{pf.lem.orzech-lem.ind}) holds for $u=p$.
This completes the induction step. The induction proof of
(\ref{pf.lem.orzech-lem.ind}) is thus complete.

Now, let us recall that $n-1\in\left\{  0,1,\ldots,n\right\}  $. Hence, we can
apply (\ref{pf.lem.orzech-lem.ind}) to $u=n-1$. As a result, we obtain%
\[
\left(  \sum_{k=0}^{n-\left(  n-1\right)  }c_{\left(  n-1\right)  +k}%
M^{k}\right)  \left(  V\right)  \subseteq V.
\]
Since%
\[
\underbrace{\sum_{k=0}^{n-\left(  n-1\right)  }}_{=\sum_{k=0}^{1}}c_{\left(
n-1\right)  +k}M^{k}=\sum_{k=0}^{1}c_{\left(  n-1\right)  +k}M^{k}%
=\underbrace{c_{\left(  n-1\right)  +0}}_{=c_{n-1}}\underbrace{M^{0}}_{=I_{n}%
}+\underbrace{c_{\left(  n-1\right)  +1}}_{=c_{n}=1}\underbrace{M^{1}}%
_{=M}=c_{n-1}I_{n}+M,
\]
this rewrites as $\left(  c_{n-1}I_{n}+M\right)  \left(  V\right)  \subseteq
V$. Now, let $w\in V$. Then,%
\[
\left(  c_{n-1}I_{n}+M\right)  \left(  \underbrace{w}_{\in V}\right)
\in\left(  c_{n-1}I_{n}+M\right)  \left(  V\right)  \subseteq V.
\]
Since $\left(  c_{n-1}I_{n}+M\right)  \left(  w\right)  =c_{n-1}%
\underbrace{I_{n}w}_{=w}+\underbrace{Mw}_{\substack{=g\left(  w\right)
\\\text{(by (\ref{pf.lem.orzech-lem.Mw}))}}}=c_{n-1}w+g\left(  w\right)  $,
this rewrites as $c_{n-1}w+g\left(  w\right)  \in V$. Hence,%
\[
g\left(  w\right)  \in V-c_{n-1}\underbrace{w}_{\in V}\subseteq V-c_{n-1}%
V\subseteq V\ \ \ \ \ \ \ \ \ \ \left(  \text{since }V\text{ is an
}A\text{-module}\right)  .
\]

Now, let us forget that we fixed $w$. We thus have shown that $g\left(
w\right)  \in V$ for every $w\in V$. In other words, $g\left(  V\right)
\subseteq V$. This proves Lemma \ref{lem.orzech-lem}.
\end{proof}

Our next step is a proof of Theorem \ref{thm.orzech} in the case when $N$
(rather than $M$) is finitely generated:

\begin{lemma}
\label{lem.orzech.fingen}Let $A$ be a commutative ring with unity. Let $M$ be
an $A$-module. Let $N$ be an $A$-submodule of $M$ such that $N$ is finitely
generated as an $A$-module. Let $f:N\rightarrow M$ be a surjective $A$-module
homomorphism. Then, $f$ is an $A$-module isomorphism.
\end{lemma}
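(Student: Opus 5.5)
The plan is to reduce to Lemma \ref{lem.orzech-lem} by lifting $f$ to an endomorphism of a free module $A^{n}$. Since $N$ is finitely generated, fix generators $x_{1},x_{2},\ldots,x_{n}$ of $N$. Let $\left(e_{1},e_{2},\ldots,e_{n}\right)$ be the standard basis of $A^{n}$, and let $\pi:A^{n}\rightarrow N$ be the surjective $A$-linear map sending each $e_{i}$ to $x_{i}$. Then $f\circ\pi:A^{n}\rightarrow M$ is surjective. Each $x_{i}$ lies in $N\subseteq M$, so we can choose $y_{i}\in A^{n}$ with $f\left(\pi\left(y_{i}\right)\right)=x_{i}$. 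Let $g:A^{n}\rightarrow A^{n}$ be the $A$-linear map sending each $e_{i}$ to $y_{i}$. Checking on the basis gives the key identity
\[
f\circ\pi\circ g=\pi
\]
as maps $A^{n}\rightarrow N\subseteq M$.

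Next, set $V=\operatorname{Ker}\left(f\circ\pi\right)$, which is an $A$-submodule of $A^{n}$. We verify the hypothesis $g^{-1}\left(V\right)\subseteq V$ of Lemma \ref{lem.orzech-lem}. Let $w\in g^{-1}\left(V\right)$. Then $f\left(\pi\left(g\left(w\right)\right)\right)=0$, which by the key identity says $\pi\left(w\right)=0$. Hence $f\left(\pi\left(w\right)\right)=f\left(0\right)=0$, so $w\in V$. Lemma \ref{lem.orzech-lem} then yields $g\left(V\right)\subseteq V$.

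Now we show that $f$ is injective. Let $z\in N$ with $f\left(z\right)=0$, and write $z=\pi\left(w\right)$ for some $w\in A^{n}$, using the surjectivity of $\pi$. Then $w\in V$, so $g\left(w\right)\in V$, that is, $f\left(\pi\left(g\left(w\right)\right)\right)=0$. By the key identity this means $\pi\left(w\right)=0$, so $z=0$. Thus $f$ is injective. Together with its surjectivity, $f$ is a bijective $A$-module homomorphism and hence an $A$-module isomorphism.

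The main obstacle is choosing the right submodule $V$ to feed into Lemma \ref{lem.orzech-lem}. The natural first guess, $V=\operatorname{Ker}\pi$, only produces a right inverse $N\rightarrow N$ of $f$. Taking instead $V=\operatorname{Ker}\left(f\circ\pi\right)$ makes the hypothesis $g^{-1}\left(V\right)\subseteq V$ automatic, since $\operatorname{Ker}\pi\subseteq V$. Its conclusion $g\left(V\right)\subseteq V$ then gives exactly $\operatorname{Ker}\left(f\circ\pi\right)\subseteq\operatorname{Ker}\pi$, which is injectivity. Every step is explicit, so the argument stays constructive.
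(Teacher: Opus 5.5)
Your proof is correct and follows the paper's argument essentially step for step. You use the same lift $g$ of $f$ to $A^{n}$ with $f\circ\pi\circ g=\pi$ and the same choice $V=\operatorname{Ker}\left(f\circ\pi\right)$, and you apply Lemma~\ref{lem.orzech-lem} in the same way to conclude that $\operatorname{Ker} f=0$.
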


\begin{proof}
[Proof of Lemma \ref{lem.orzech.fingen}.]We know that $N$ is finitely
generated. In other words, there exist finitely many elements $a_{1}%
,a_{2},\ldots,a_{n}$ of $N$ such that $N$ is generated by $a_{1},a_{2}%
,\ldots,a_{n}$ as an $A$-module. Consider these $a_{1},a_{2},\ldots,a_{n}$.

Let $\left(  e_{1},e_{2},\ldots,e_{n}\right)  $ be the standard basis of the
$A$-module $A^{n}$. (Thus, for every $i\in\left\{  1,2,\ldots,n\right\}  $,
the vector $e_{i}$ is the vector in $A^{n}$ whose $i$-th coordinate is $1$ and
whose other coordinates are all $0$.) Clearly, in order to define an
$A$-linear map from $A^{n}$ to an $A$-module, it is enough to specify the
images of this map at the basis vectors $e_{i}$ (and these images can be
chosen arbitrarily). Thus, we can define an $A$-linear map $p:A^{n}\rightarrow
N$ by%
\[
\left(  p\left(  e_{i}\right)  =a_{i}\ \ \ \ \ \ \ \ \ \ \text{for every }%
i\in\left\{  1,2,\ldots,n\right\}  \right)  .
\]
Consider this $p$.

\begin{verlong}
It is fairly clear that the map $p$ is surjective\footnote{\textit{Proof.} The
$A$-module $N$ is generated by $a_{1},a_{2},\ldots,a_{n}$. In other words,
$N=Aa_{1}+Aa_{2}+\cdots+Aa_{n}$. Now, for every $i\in\left\{  1,2,\ldots
,n\right\}  $, we have $A\underbrace{p\left(  e_{i}\right)  }_{=a_{i}}=Aa_{i}%
$. Adding together these equalities for all $i\in\left\{  1,2,\ldots
,n\right\}  $, we obtain $Ap\left(  e_{1}\right)  +Ap\left(  e_{2}\right)
+\cdots+Ap\left(  e_{n}\right)  =Aa_{1}+Aa_{2}+\cdots+Aa_{n}=N$. Now,
\begin{align*}
p\left(  Ae_{1}+Ae_{2}+\cdots+Ae_{n}\right)   &  =Ap\left(  e_{1}\right)
+Ap\left(  e_{2}\right)  +\cdots+Ap\left(  e_{n}\right)
\ \ \ \ \ \ \ \ \ \ \left(  \text{since the map }p\text{ is }A\text{-linear}%
\right) \\
&  =N,
\end{align*}
so that $N=p\left(  \underbrace{Ae_{1}+Ae_{2}+\cdots+Ae_{n}}_{\subseteq A^{n}%
}\right)  \subseteq p\left(  A^{n}\right)  $. In other words, the map $p$ is
surjective, qed.}. The composition of two surjective maps is always
surjective. Thus, the composition $f\circ p:A^{n}\rightarrow M$ of the maps
$f$ and $p$ is surjective (since the maps $f$ and $p$ are surjective). Thus,
$M=\left(  f\circ p\right)  \left(  A^{n}\right)  $.
\end{verlong}

\begin{vershort}
The generators $a_{1},a_{2},\ldots,a_{n}$ of the $A$-module $N$ are in the
image of the map $p$ (since $a_{i}=p\left(  e_{i}\right)  $ for every
$i\in\left\{  1,2,\ldots,n\right\}  $). Thus, the $A$-linear map
$p:A^{n}\rightarrow N$ is surjective. Hence, the map $f\circ p:A^{n}%
\rightarrow M$ is also surjective (being the composition of the surjective
maps $f$ and $p$). Hence, $M=\left(  f\circ p\right)  \left(  A^{n}\right)  $.
\end{vershort}

Let us now define $n$ elements $h_{1},h_{2},\ldots,h_{n}$ of $A^{n}$ as
follows: For every $i\in\left\{  1,2,\ldots,n\right\}  $, there exists a
vector $h\in A^{n}$ such that $p\left(  e_{i}\right)  =\left(  f\circ
p\right)  \left(  h\right)  $ (since $p\left(  e_{i}\right)  \in N\subseteq
M=\left(  f\circ p\right)  \left(  A^{n}\right)  $). Pick such an $h$ and
denote it by $h_{i}$. Thus, for every $i\in\left\{  1,2,\ldots,n\right\}  $,
we have defined a vector $h_{i}\in A^{n}$ such that
\begin{equation}
p\left(  e_{i}\right)  =\left(  f\circ p\right)  \left(  h_{i}\right)  .
\label{pf.lem.orzech.fingen.hi}%
\end{equation}
We have thus constructed $n$ elements $h_{1},h_{2},\ldots,h_{n}$ of $A^{n}$.

Recall that, in order to define an $A$-linear map from $A^{n}$ to an
$A$-module, it is enough to specify the images of this map at the basis
vectors $e_{i}$ (and these images can be chosen arbitrarily). Hence, we can
define an $A$-linear map $g:A^{n}\rightarrow A^{n}$ by%
\[
\left(  g\left(  e_{i}\right)  =h_{i}\ \ \ \ \ \ \ \ \ \ \text{for every }%
i\in\left\{  1,2,\ldots,n\right\}  \right)  .
\]
Consider this $g$. Then, $f\circ p\circ g=p$\ \ \ \ \footnote{\textit{Proof.}
Every $i\in\left\{  1,2,\ldots,n\right\}  $ satisfies%
\[
\left(  f\circ p\circ g\right)  \left(  e_{i}\right)  =\left(  f\circ
p\right)  \left(  \underbrace{g\left(  e_{i}\right)  }_{=h_{i}}\right)
=\left(  f\circ p\right)  \left(  h_{i}\right)  =p\left(  e_{i}\right)
\ \ \ \ \ \ \ \ \ \ \left(  \text{by (\ref{pf.lem.orzech.fingen.hi})}\right)
.
\]
In other words, the $A$-linear maps $f\circ p\circ g$ and $p$ are equal to
each other on each element of the basis $\left(  e_{1},e_{2},\ldots
,e_{n}\right)  $ of $A^{n}$. Consequently, these maps $f\circ p\circ g$ and
$p$ must be identical (because if two $A$-linear maps from some $A$-module $P$
are equal to each other on each element of a given basis of $P$, then these
two maps must be identical). In other words, $f\circ p\circ g=p$, qed.}.

Let $V$ be the $A$-submodule $\operatorname*{Ker}\left(  f\circ p\right)  $ of
$A^{n}$. It is straightforward to prove that $g^{-1}\left(  V\right)
\subseteq V$\ \ \ \ \footnote{\textit{Proof.} Let $w\in g^{-1}\left(
V\right)  $. Then, $g\left(  w\right)  \in V=\operatorname*{Ker}\left(  f\circ
p\right)  $, so that $\left(  f\circ p\right)  \left(  g\left(  w\right)
\right)  =0$. Thus, $0=\left(  f\circ p\right)  \left(  g\left(  w\right)
\right)  =\underbrace{\left(  f\circ p\circ g\right)  }_{=p}\left(  w\right)
=p\left(  w\right)  $, so that $p\left(  w\right)  =0$ and thus $\left(
f\circ p\right)  \left(  w\right)  =f\left(  \underbrace{p\left(  w\right)
}_{=0}\right)  =f\left(  0\right)  =0$ (since $f$ is $A$-linear).
Consequently, $w\in\operatorname*{Ker}\left(  f\circ p\right)  =V$.
\par
Let us now forget that we fixed $w$. We thus have shown that $w\in V$ for
every $w\in g^{-1}\left(  V\right)  $. In other words, $g^{-1}\left(
V\right)  \subseteq V$, qed.}. Lemma \ref{lem.orzech-lem} thus shows that
$g\left(  V\right)  \subseteq V$.

Let now $w\in\operatorname*{Ker}f$ be arbitrary. Then, $w\in N$ satisfies
$f\left(  w\right)  =0$ (since $w\in\operatorname*{Ker}f$). But the map $p$ is
surjective; thus, $N=p\left(  A^{n}\right)  $. Hence, $w\in N=p\left(
A^{n}\right)  $. In other words, there exists some $v\in A^{n}$ such that
$w=p\left(  v\right)  $. Consider this $v$. We have $\left(  f\circ p\right)
\left(  v\right)  =f\left(  \underbrace{p\left(  v\right)  }_{=w}\right)
=f\left(  w\right)  =0$, so that $v\in\operatorname*{Ker}\left(  f\circ
p\right)  =V$ and thus $g\left(  \underbrace{v}_{\in V}\right)  \in g\left(
V\right)  \subseteq V=\operatorname*{Ker}\left(  f\circ p\right)  $ and thus
$\left(  f\circ p\right)  \left(  g\left(  v\right)  \right)  =0$. Thus,
$\left(  f\circ p\circ g\right)  \left(  v\right)  =\left(  f\circ p\right)
\left(  g\left(  v\right)  \right)  =0$. Since $f\circ p\circ g=p$, this
rewrites as $p\left(  v\right)  =0$. Thus, $w=p\left(  v\right)  =0$.

Now, let us forget that we fixed $w$. We thus have proven that $w=0$ for every
$w\in\operatorname*{Ker}f$. In other words, $\operatorname*{Ker}f=0$. Hence,
the map $f$ is injective. Since $f$ is also surjective, this yields that $f$
is bijective. Thus, $f$ is an $A$-module isomorphism (since $f$ is an
$A$-module homomorphism). This proves Lemma \ref{lem.orzech.fingen}.
\end{proof}

Now, we can finally step to the proof of Theorem \ref{thm.orzech}:

\begin{proof}
[Proof of Theorem \ref{thm.orzech}.]We know that $M$ is finitely generated. In
other words, there exist finitely many elements $a_{1},a_{2},\ldots,a_{n}$ of
$M$ such that $M$ is generated by $a_{1},a_{2},\ldots,a_{n}$ as an $A$-module.
Consider these $a_{1},a_{2},\ldots,a_{n}$.

Notice that $M=f\left(  N\right)  $ (since the map $f$ is surjective).

For every $i\in\left\{  1,2,\ldots,n\right\}  $, we define an element $g_{i}$
of $N$ as follows: There exists some $g\in N$ such that $a_{i}=f\left(
g\right)  $ (since $a_{i}\in M=f\left(  N\right)  $). Pick such a $g$ and
denote it by $g_{i}$. Thus, for every $i\in\left\{  1,2,\ldots,n\right\}  $,
we have defined some $g_{i}\in N$ satisfying%
\begin{equation}
a_{i}=f\left(  g_{i}\right)  . \label{pf.thm.orzech.gi}%
\end{equation}
Hence, we have defined $n$ elements $g_{1},g_{2},\ldots,g_{n}$ of $N$.

\begin{verlong}
Let $v\in\operatorname*{Ker}f$. Let $N^{\prime}$ be the $A$-submodule
$Av+\left(  Ag_{1}+Ag_{2}+\cdots+Ag_{n}\right)  $ of $N$. Then,%
\[
N^{\prime}=Av+\left(  Ag_{1}+Ag_{2}+\cdots+Ag_{n}\right)  =Av+Ag_{1}%
+Ag_{2}+\cdots+Ag_{n}.
\]
In other words, the $A$-module $N^{\prime}$ is generated by the $n+1$ elements
$v,g_{1},g_{2},\ldots,g_{n}$. Hence, the $A$-module $N^{\prime}$ is finitely
generated. Also, $N^{\prime}\subseteq N\subseteq M$. It is easy to see that
the $A$-linear map $f\mid_{N^{\prime}}:N^{\prime}\rightarrow M$ is
surjective\footnote{\textit{Proof.} The $A$-module $M$ is generated by
$a_{1},a_{2},\ldots,a_{n}$. In other words, $M=Aa_{1}+Aa_{2}+\cdots+Aa_{n}$.
Now, for every $i\in\left\{  1,2,\ldots,n\right\}  $, we have
$A\underbrace{f\left(  g_{i}\right)  }_{\substack{=a_{i}\\\text{(by
(\ref{pf.thm.orzech.gi}))}}}=Aa_{i}$. Adding together these equalities for all
$i\in\left\{  1,2,\ldots,n\right\}  $, we obtain $Af\left(  g_{1}\right)
+Af\left(  g_{2}\right)  +\cdots+Af\left(  g_{n}\right)  =Aa_{1}+Aa_{2}%
+\cdots+Aa_{n}=M$. Now, $Ag_{1}+Ag_{2}+\cdots+Ag_{n}$ is an $A$-submodule of
$N^{\prime}$ (since $N^{\prime}=Av+\left(  Ag_{1}+Ag_{2}+\cdots+Ag_{n}\right)
$), and we have%
\begin{align*}
\left(  f\mid_{N^{\prime}}\right)  \left(  Ag_{1}+Ag_{2}+\cdots+Ag_{n}\right)
&  =f\left(  Ag_{1}+Ag_{2}+\cdots+Ag_{n}\right) \\
&  =Af\left(  g_{1}\right)  +Af\left(  g_{2}\right)  +\cdots+Af\left(
g_{n}\right)  \ \ \ \ \ \ \ \ \ \ \left(  \text{since the map }f\text{ is
}A\text{-linear}\right) \\
&  =M,
\end{align*}
so that $M=\left(  f\mid_{N^{\prime}}\right)  \left(  \underbrace{Ag_{1}%
+Ag_{2}+\cdots+Ag_{n}}_{\subseteq N^{\prime}}\right)  \subseteq\left(
f\mid_{N^{\prime}}\right)  \left(  N^{\prime}\right)  $. In other words, the
map $f\mid_{N^{\prime}}$ is surjective, qed.}. Hence, Lemma
\ref{lem.orzech.fingen} (applied to $N^{\prime}$ and $f\mid_{N^{\prime}}$
instead of $N$ and $f$) yields that $f\mid_{N^{\prime}}$ is an $A$-module
isomorphism. In particular, $f\mid_{N^{\prime}}$ is injective. Thus,
$\operatorname*{Ker}\left(  f\mid_{N^{\prime}}\right)  =0$.
\end{verlong}

\begin{vershort}
Let $v\in\operatorname*{Ker}f$. We shall prove that $v=0$.

Let $N^{\prime}$ be the $A$-submodule $Av+\left(  Ag_{1}+Ag_{2}+\cdots
+Ag_{n}\right)  $ of $N$. Then, the $A$-module $N^{\prime}$ is finitely
generated (in fact, it is generated by the $n+1$ elements $v,g_{1}%
,g_{2},\ldots,g_{n}$) and satisfies $N^{\prime}\subseteq N\subseteq M$. Also,
the $A$-linear map $f\mid_{N^{\prime}}:N^{\prime}\rightarrow M$ is surjective,
because its image contains the generators $a_{1},a_{2},\ldots,a_{n}$ of $M$
(in fact, for every $i\in\left\{  1,2,\ldots,n\right\}  $, we have $g_{i}\in
Ag_{i}\subseteq Av+\left(  Ag_{1}+Ag_{2}+\cdots+Ag_{n}\right)  \subseteq
N^{\prime}$ and thus $a_{i}=f\left(  \underbrace{g_{i}}_{\in N^{\prime}%
}\right)  =\left(  f\mid_{N^{\prime}}\right)  \left(  g_{i}\right)  $, which
shows that the image of $f\mid_{N^{\prime}}$ contains $a_{i}$). Hence, Lemma
\ref{lem.orzech.fingen} (applied to $N^{\prime}$ and $f\mid_{N^{\prime}}$
instead of $N$ and $f$) yields that $f\mid_{N^{\prime}}$ is an $A$-module
isomorphism. In particular, $f\mid_{N^{\prime}}$ is injective. Thus,
$\operatorname*{Ker}\left(  f\mid_{N^{\prime}}\right)  =0$.
\end{vershort}

But $v\in Av\subseteq Av+\left(  Ag_{1}+Ag_{2}+\cdots+Ag_{n}\right)
=N^{\prime}$ and $\left(  f\mid_{N^{\prime}}\right)  \left(  v\right)
=f\left(  v\right)  =0$ (since $v\in\operatorname*{Ker}f$). Hence,
$v\in\operatorname*{Ker}\left(  f\mid_{N^{\prime}}\right)  =0$. In other
words, $v=0$.

Now, let us forget that we fixed $v$. We thus have shown that $v=0$ for every
$v\in\operatorname*{Ker}f$. In other words, $\operatorname*{Ker}f=0$. Hence,
the map $f$ is injective. Since $f$ is also surjective, this yields that $f$
is bijective. Thus, $f$ is an $A$-module isomorphism (since $f$ is an
$A$-module homomorphism). This proves Theorem \ref{thm.orzech}.
\end{proof}

\begin{proof}
[Proof of Corollary \ref{cor.orzech}.]Corollary \ref{cor.orzech} follows
immediately from Theorem \ref{thm.orzech} (applied to $N=M$).
\end{proof}


We end with two questions. Corollary~\ref{cor.orzech} is known to have a
multiplicative version (due to Orzech and Ribes \cite[Theorem 6]%
{orzrib}\footnote{See also \cite{mse1217452} and \cite[Example 4]{sharifi1}}):
If $B$ is a finitely generated commutative $A$-algebra, and $f:B\rightarrow B$
is a surjective $A$-algebra endomorphism of $B$, then $f$ is an $A$-algebra isomorphism.

\begin{question}
Does this have a constructive proof?
\end{question}

\begin{question}
Is there an analogous variant of Orzech's theorem (Theorem~\ref{thm.orzech})
for $A$-algebras instead of $A$-modules?
\end{question}


\begin{thebibliography}{99}                                                                                               %


\bibitem {orzech}Morris Orzech, \textit{Onto Endomorphisms are Isomorphisms},
Amer. Math. Monthly 78 (1971), pp. 357--362.\newline\url{https://doi.org/10.1080/00029890.1971.11992759}

\bibitem {mse}\textit{Is Orzech's generalization of the
surjective-endomorphism-is-injective theorem correct?}, math.stackexchange
question \#1065786 and consequent discussion.\newline\url{http://math.stackexchange.com/questions/1065786}

\bibitem {mseold}\textit{Surjective endomorphisms of finitely generated
modules are isomorphisms}, math.stackexchange question \#239364 and consequent
discussion.\newline\url{http://math.stackexchange.com/questions/239364}

\bibitem {mse1217452}\textit{Is a surjective $R$-endomorphism over a finitely
generated $R$-algebra always bijective?}, math.stackexchange question
\#1217452 and consequent discussion.\newline\url{https://math.stackexchange.com/questions/1217452}

\bibitem {orzrib}Morris Orzech, Luis Ribes, \textit{Residual finiteness and
the Hopf property in rings}, Journal of Algebra \textbf{15} (1970), Issue 1,
pp. 81--88.\newline\url{https://doi.org/10.1016/0021-8693(70)90087-6}

\bibitem {gill}Joel G. Broida and S. Gill Williamson, \textit{Comprehensive
Introduction to Linear Algebra}, Addison-Wesley 1989.\newline\url{http://cseweb.ucsd.edu/~gill/CILASite/}

\bibitem {grinberg-trach}Darij Grinberg, \textit{The trace Cayley-Hamilton
theorem}, 14 April 2026,
\href{https://arxiv.org/abs/2510.20689v2}{arXiv:2510.20689v2}.\newline\url{http://www.cip.ifi.lmu.de/~grinberg/algebra/trach.pdf}

\bibitem {straubing}Howard Straubing, \textit{A combinatorial proof of the
Cayley-Hamilton theorem}, Discrete Mathematics, Volume 43, Issues 2--3, 1983,
pp. 273--279.\newline\url{https://doi.org/10.1016/0012-365X(83)90164-4}

\bibitem {zeilberger}Doron Zeilberger, \textit{A combinatorial approach to
matrix algebra}, Discrete Mathematics, Volume 56, Issue 1, September 1985, pp.
61--72.\newline\url{https://doi.org/10.1016/0012-365X(85)90192-X} \newline A
corrected version is available at\newline\url{https://sites.math.rutgers.edu/~zeilberg/mamarimY/DM85dg.pdf}

\bibitem {bernhardt}Chris Bernhardt, \textit{A proof of the Cayley Hamilton
theorem}.\newline\url{http://web.archive.org/web/20180417053920/http://faculty.fairfield.edu/cbernhardt/cayleyhamilton.pdf}

\bibitem {jerry332}Jerry Shurman, \textit{The Cayley-Hamilton theorem via
multilinear algebra}, version 24 May 2015.\newline\url{http://people.reed.edu/~jerry/332/28ch.pdf}

\bibitem {knapp2016}Anthony W. Knapp, \textit{Basic Algebra}, Digital Second
Edition, 2016.\newline\url{http://www.math.stonybrook.edu/~aknapp/download.html}

\bibitem {conrad1}Keith Conrad, \textit{Universal Identities I}, 2013.\newline\url{http://www.math.uconn.edu/~kconrad/blurbs/}

\bibitem {dg-pidisolift}Darij Grinberg, \textit{A note on lifting isomorphisms
of modules over PIDs}.\newline\url{http://www.cip.ifi.lmu.de/~grinberg/algebra/pidisolift.pdf}

\bibitem {sharifi1}Yaghoub Sharifi, \textit{Abstract Algebra: Hopfian algebras
(1)}, 18 December 2021, blog post.\newline\url{https://ysharifi.wordpress.com/2021/12/18/hopfian-algebras-1/}
\end{thebibliography}
\end{document}